\author{Tuomas Orponen}\thanks{The research was supported by the Finnish National Doctoral Programme in Mathematics and its Applications}
\title{On the Packing Measure of Self-Similar Sets}
\address{Department of Mathematics and Statistics, University of Helsinki, P.O.B. 68, FI-00014 Helsinki, Finland}\subjclass[2010]{28A80 (Primary); 28A78, 37C45 (Secondary)}
\email{tuomas.orponen@helsinki.fi} 
\newcommand{\R}{\mathbb{R}}
\newcommand{\N}{\mathbb{N}}
\newcommand{\Q}{\mathbb{Q}}
\newcommand{\tn}{\mathbf{P}}
\newcommand{\spt}{\operatorname{spt}}
\newcommand{\Hd}{\dim_{\mathrm{H}}}
\newcommand{\calP}{\mathcal{P}}
\numberwithin{equation}{section}
\theoremstyle{plain}
\newtheorem{thm}[equation]{Theorem}
\newtheorem{lemma}[equation]{Lemma}
\theoremstyle{definition}
\theoremstyle{remark}
\begin{document}

\maketitle

\begin{abstract} Building on a recent result of M. Hochman \cite{Ho}, we give an example of a self-similar set $K \subset \R$ such that $\Hd K = s \in (0,1)$ and $\calP^{s}(K) = 0$. This answers a question of Y. Peres and B. Solomyak.
\end{abstract}

\section{Introduction}

In 1998, B. Solomyak \cite{So} showed that there exist self-similar sets $K \subset \R$ of Hausdorff dimension $\Hd K = s \in (0,1)$, which have zero $s$-dimensional Hausdorff measure. In 2000, Y. Peres, K. Simon and Solomyak \cite{PSS} proved that such examples are not even all that rare: for certain natural families of self-similar sets, a large proportion of all sets in the family have this property. 

For packing measure, things are different. In fact, the same theorem in \cite{PSS} contained the further statement that almost all sets in these families have positive and finite $s$-dimensional packing measure. Until a recent breakthrough article of M. Hochman \cite{Ho}, it appeared hard to determine whether this statement could be further strengthened as follows: all self-similar sets $K$ with $\Hd K = s \in (0,1)$ have positive $s$-dimensional packing measure, denoted by $\calP^{s}$. The question is explicitly stated in \cite[Question 2.3]{PS}. In this note, building on Hochman's paper, we answer the question in the negative by exhibiting an explicit counterexample. In fact, we find a self-similar set $K \subset \R$ with \emph{similarity dimension} $s := \log 3/\log 4$ such that $K$ has 'no total overlaps' and $\calP^{s}(K) = 0$. Then, we employ Hochman's result to conclude that $K$ has Hausdorff dimension $s$. More precisely, we use the following theorem, the proof of which is the same as \cite[Theorem 1.6]{Ho}, apart from changing some numerical values:
\begin{thm}\label{hochman} Let $K_{u} \subset \R$ be the self-similar set generated by the three similitudes
\begin{displaymath} \psi_{0}(x) := \frac{x}{4}, \quad \psi_{1}(x) := \frac{x + 1}{4}, \quad \psi_{u}(x) := \frac{x + u}{4}, \end{displaymath}
where $u \in [0,1]$. Then $\Hd K_{u} = \log 3/\log 4$ for every $u \in \R \setminus \Q$.
\end{thm}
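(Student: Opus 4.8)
The plan is to prove the trivial upper bound directly and to extract the lower bound from Hochman's inverse theorem for entropy \cite{Ho}, which reduces the matter to an elementary Diophantine fact about $u$; that fact is the crux. First, bookkeeping: writing $\psi_i(x) = x/4 + b_i$ with $b_0 = 0$, $b_1 = 1/4$, $b_u = u/4$, the composition $\psi_w := \psi_{w_1} \circ \cdots \circ \psi_{w_n}$ along a word $w \in \{0,1,u\}^n$ is the similitude $x \mapsto 4^{-n}x + t_w$, where $t_w = \sum_{k=1}^{n} 4^{-k} a_{w_k}$ and $a_0 = 0$, $a_1 = 1$, $a_u = u$. The upper bound $\Hd K_u \le \log 3/\log 4 =: s$ is trivial: $K_u$ is covered by the $3^n$ sets $\psi_w(I)$, $w \in \{0,1,u\}^n$, where $I$ is any bounded interval containing $K_u$, each of diameter $\lesssim 4^{-n}$, and $3^n(4^{-n})^s = 1$. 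So everything lies in the lower bound $\Hd K_u \ge s$.

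For the lower bound, let $\mu_u$ be the self-similar measure on $K_u$ obtained by pushing forward the uniform $(\tfrac13,\tfrac13,\tfrac13)$-Bernoulli measure, so that $\Hd K_u \ge \dim \mu_u$; its entropy is $\log 3$ and its Lyapunov exponent is $\log 4$, so the ratio equals $s \le 1$. Hochman's theorem \cite{Ho} — here we simply repeat the proof of \cite[Theorem 1.6]{Ho} with $3$ replaced by $4$ — gives the dichotomy: either $\dim \mu_u = \min\{1,s\} = s$, and then $\Hd K_u \ge s$ as wanted, or else there is super-exponential concentration of cylinders, $\tfrac1n \log \Delta_n \to -\infty$, where
\[ \Delta_n := \min\{\,|t_w - t_{w'}| : w,w' \in \{0,1,u\}^n,\ t_w \ne t_{w'}\,\}. \]
Since the contraction ratios all equal $1/4$, this $\Delta_n$ agrees up to a bounded factor with the minimal gap between distinct level-$n$ cylinders occurring in \cite{Ho}, and the minimum is over a nonempty finite set (e.g.\ $t_{0^n} = 0 \ne t_{1^n}$). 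The whole task, and the only hard part, is to rule out the second alternative when $u$ is irrational.

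So suppose $\tfrac1n \log \Delta_n \to -\infty$ and, for each large $n$, choose $w_n \ne w_n'$ in $\{0,1,u\}^n$ with $|t_{w_n} - t_{w_n'}| = \Delta_n > 0$. Clearing denominators, $4^n\bigl(t_{w_n} - t_{w_n'}\bigr) = C_n + D_n u$ with $C_n, D_n \in \mathbb{Z}$ and $|C_n|, |D_n| \le \sum_{k=1}^{n} 4^{n-k} < 4^n/3$, since each difference $a_{(w_n)_k} - a_{(w_n')_k}$ lies in $\{0, \pm 1, \pm u, \pm(1-u)\}$ and hence contributes coefficients in $\{0,\pm1\}$ to both $C_n$ and $D_n$. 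As $u \notin \Q$ and $C_n + D_n u = 4^n\Delta_n \ne 0$, we get $(C_n, D_n) \ne (0,0)$; and $D_n = 0$ would give $\Delta_n = 4^{-n}|C_n| \ge 4^{-n}$, impossible for large $n$. Hence $D_n \ne 0$, and $p_n/q_n := -C_n/D_n$ is a rational with $q_n := |D_n| \in [1, 4^n/3)$ satisfying
\[ \Bigl| u - \frac{p_n}{q_n} \Bigr| = \frac{4^n \Delta_n}{q_n} \le 4^n \Delta_n =: \rho_n^{\,n}, \qquad \rho_n := 4\,\Delta_n^{1/n} \to 0. \]

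The endgame is to force these approximants to stabilize. For all large $n$,
\[ \Bigl| \frac{p_n}{q_n} - \frac{p_{n+1}}{q_{n+1}} \Bigr| \;\le\; \rho_n^{\,n} + \rho_{n+1}^{\,n+1} \;<\; \frac{9}{4^{2n+1}} \;<\; \frac{1}{q_n q_{n+1}}, \]
the middle inequality because $\rho_n \to 0$ (so eventually $\rho_n, \rho_{n+1} < 1/16$) and the last because $q_n q_{n+1} < 4^{2n+1}/9$. But a nonzero difference of two rationals with denominators $q_n, q_{n+1}$ has modulus at least $1/(q_n q_{n+1})$, so $p_n/q_n = p_{n+1}/q_{n+1}$ for every large $n$; being eventually constant, it equals some fixed $p/q \in \Q$, whence $|u - p/q| \le \rho_n^{\,n} \to 0$ and $u = p/q$, contradicting $u \in \R \setminus \Q$. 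The point worth emphasizing — and the reason the conclusion holds for \emph{every} irrational $u$ and not merely a generic one — is that the rationality of the contraction ratio $1/4$ caps the denominators $q_n$ at $4^n/3$, so super-exponential decay of $\Delta_n$ outpaces the $\asymp 4^{-2n}$ gap between distinct admissible approximants and collapses them onto a single rational; nothing else about the specific number $\log 3/\log 4$ is used, which is precisely why the argument is that of \cite[Theorem 1.6]{Ho} verbatim.
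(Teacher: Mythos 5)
Your proof is correct and is essentially the paper's proof: the paper offers no independent argument but states that Theorem \ref{hochman} is proved exactly as \cite[Theorem 1.6]{Ho} after changing numerical values, and that is precisely what you have written out (the inverse-theorem dichotomy of \cite{Ho} plus the Diophantine stabilization of the approximants $-C_n/D_n$ with denominators capped at $4^n/3$). The only point worth adding is that your $\Delta_n$, a minimum over pairs with $t_w \neq t_{w'}$, coincides with Hochman's minimum over distinct words because irrationality of $u$ rules out exact overlaps: a coincidence $t_w = t_{w'}$ with $w \neq w'$ would give $C + Du = 0$ with integers $(C,D) \neq (0,0)$, by the same digit bookkeeping you already use.
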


\section{Acknowledgements}

I am grateful to the referees for their constructive suggestions.

\section{Self-similar sets and packing measures} 

A set $K \subset \R$ is \emph{self-similar}, if $K$ is compact and satisfies the equation
\begin{equation}\label{selfSimilar} K = \bigcup_{j = 1}^{q} \psi_{j}(K), \end{equation}
where the mappings $\psi_{j} \colon \R \to \R$, $1 \leq j \leq q$, are contracting similitudes. This means that $\psi_{j}$ has the form $\psi_{j}(x) = r_{j}x + a_{j}$ for some \emph{contraction ratio} $r_{j} \in (-1,1)$ and \emph{translation vector} $a_{j} \in \R$. For a given set of contracting similitudes, there is one and only one non-empty compact set $K$ satisfying \eqref{selfSimilar}; this foundational result is due to J. Hutchinson \cite{Hu}. The \emph{similarity dimension} of a self-similar set $K$, as in \eqref{selfSimilar}, is the unique number $s \geq 0$ satisfying
\begin{displaymath} \sum_{j = 1}^{q} r_{j}^{s} = 1. \end{displaymath}
The Hausdorff dimension of $K$, denoted by $\Hd K$, is always bounded from above by the similarity dimension, see \cite[Theorem 9.3]{Fa}. In general, however, it is a hard problem to determine when the two dimensions coincide. A recent breakthrough in this respect is Hochman's paper \cite{Ho}, containing many satisfactory answers, including but not limited to Theorem \ref{hochman}. 

On a self-similar set $K$, as in \eqref{selfSimilar}, there is supported a \emph{natural self-similar measure} $\mu$, satisfying the equation
\begin{displaymath} \mu = \sum_{j = 1}^{q} r_{j} \cdot \psi_{j\sharp}\mu. \end{displaymath}
Here $\psi_{j\sharp}\mu$ is the \emph{push-forward} of $\mu$ under $\psi_{j}$, defined by $\psi_{j\sharp}\mu(B) = \mu(\psi_{j}^{-1}(B))$ for $B \subset \R$. We will be concerned with the question, when (or when not) $\mu$ is absolutely continuous with respect to the \emph{$s$-dimensional packing measure on $\R$}, denoted by $\calP^{s}$. The definition of $\calP^{s}$ is not directly used in the paper, but we include it here for completeness. First, one defines the \emph{$s$-dimensional packing pre-measure} $P^{s}$ by
\begin{displaymath} P^{s}(K) = \lim_{\delta \to 0} \sup \left\{ \sum_{i = 1}^{\infty} d(B_{i})^{s} : B_{i} \in \mathcal{D}_{\delta}(K) \text{ are disjoint} \right\}, \end{displaymath}
where $\mathcal{D}_{\delta}(K)$ is the collection of balls centred at $K$, with $d(B_{i}) \leq \delta$. The pre-measure $P^{s}$ is not countably additive, unfortunately, and this is the reason for defining
\begin{displaymath} \calP^{s}(K) := \inf \left\{ \sum_{i = 1}^{\infty} P^{s}(B_{i}) : K \subset \bigcup_{i = 1}^{\infty} B_{i} \right\}. \end{displaymath}
For more information on packing measures, see the book chapters \cite[\S 5.10]{Ma} and \cite[\S 3.4]{Fa}.

We will occasionally use the notation $A \lesssim B$ to mean that $A \leq CB$ for some absolute consonant $C \geq 1$. The two-sided inequality $A \lesssim B \lesssim A$ is abbreviated to $A \asymp B$.

\section{The Construction of $K$ and some reductions}

The self-similar set $K$, which answers \cite[Question 2.3]{PS} negatively, is generated by the three similitudes $\psi_{0}$, $\psi_{1}$ and $\psi_{u}$ as introduced in Theorem \ref{hochman}. The parameter $u \in [0,1]$ is chosen as follows: pick natural numbers $\lambda_{j} \in \{3^{3^{j}},3^{3^{j}} + 1\}$ in such a manner that
\begin{equation}\label{U} u := \sum_{j = 1}^{\infty} 4^{-\lambda_{j}} \in \R \setminus \Q. \end{equation}  
This is certainly possible, since there are uncountably many admissible sequences $(\lambda_{1},\lambda_{2},\ldots)$, and no two sequences produce the same number $u$. In fact, as pointed out by one of the referees, the choice $\lambda_{j} = 3^{3^{j}}$ is admissible, since the base-$4$ expansion of $u$ so obtained is not eventually periodic. Theorem \ref{hochman} now implies that $\Hd K = \log 3/\log 4 =: s$, so it remains to prove that $\calP^{s}(K) = 0$.

According to \cite[Corollary 2.2]{PSS}, if $\mathcal{P}^{s}(K) > 0$, then the natural self-similar measure $\mu$ supported on $K$ coincides with a normalised version of the restriction of $\mathcal{P}^{s}$ to $K$. In particular, this means that $\mu \ll \mathcal{P}^{s}$. Using \cite[Chapter 6, Exercise 5]{Ma}, we can then infer that
\begin{displaymath} \Theta^{s}_{\ast}(\mu,x) := \liminf_{r \to 0} \frac{\mu(B(x,r))}{(2r)^{s}} < \infty \end{displaymath} 
for $\mu$ almost every $x \in \R$. Thus, in order to show that $\calP^{s}(K) = 0$, we need to verify the following theorem.
\begin{thm}\label{mainC} Let $\mu$ be the natural self-similar measure on $\R$ associated with the system $\{\psi_{0},\psi_{1},\psi_{u}\}$. Then $\Theta_{\ast}^{s}(\mu,x) = \infty$ at $\mu$ almost every point $x \in \R$.
\end{thm}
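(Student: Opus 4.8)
The plan is to pass to the symbolic model of $\mu$ and to show that, for $\mu$ almost every $x$, the number of distinct level-$n$ cylinders landing within distance $4^{-n}$ of $x$ tends to infinity; this forces the ratio $\mu(B(x,r))/(2r)^{s}$ to diverge as $r \to 0$. Write $\Omega := \{0,1,u\}^{\N}$, let $\nu$ be the uniform Bernoulli measure on $\Omega$, and let $\pi \colon \Omega \to \R$, $\pi(\omega) := \sum_{k \geq 1} 4^{-k}\omega_{k}$, be the coding map, so that $\mu = \pi_{\sharp}\nu$. For a finite word $\mathbf{i} = (i_{1},\ldots,i_{n})$ over $\{0,1,u\}$ put $\psi_{\mathbf{i}} := \psi_{i_{1}} \circ \cdots \circ \psi_{i_{n}}$, so that $\psi_{\mathbf{i}}(0) = \sum_{k=1}^{n} 4^{-k}i_{k}$, and write $\omega|_{n} := (\omega_{1},\ldots,\omega_{n})$. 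Since $K \subset [0,\tfrac13]$, the cylinder $\psi_{\mathbf{i}}(K)$ is contained in $B(x,\tfrac43 \cdot 4^{-n})$ as soon as $|\psi_{\mathbf{i}}(0) - x| \leq 4^{-n}$, so expanding $\mu = \sum_{\mathbf{i} \in \{0,1,u\}^{n}} 3^{-n}\psi_{\mathbf{i}\sharp}\mu$ and noting that every such $\mathbf{i}$ contributes at least $3^{-n}$ to $\mu(B(x,\tfrac43 \cdot 4^{-n}))$ yields
\begin{displaymath} \mu\bigl(B(x,\tfrac43 \cdot 4^{-n})\bigr) \geq 3^{-n}\,N_{n}(x), \qquad N_{n}(x) := \#\bigl\{\mathbf{i} \in \{0,1,u\}^{n} : |\psi_{\mathbf{i}}(0) - x| \leq 4^{-n}\bigr\}. \end{displaymath}
Because $3^{-n} = (4^{-n})^{s}$, a routine comparison of radii then reduces Theorem \ref{mainC} to showing that $N_{n}(\pi(\omega)) \to \infty$ for $\nu$ almost every $\omega$.

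Many cylinders near $\pi(\omega)$ come from the fact that $u$ is superexponentially well approximated by the truncations $\sum_{t \leq m} 4^{-\lambda_{t}}$ of its own base-$4$ expansion. Fix $\omega$, let $n$ be large, and take a position $p \in \{1,\ldots,n\}$ with $\omega_{p} = 0$; let $m = m(p,n)$ be the largest index with $p + \lambda_{m} \leq n$. If moreover $\omega_{p+\lambda_{t}} = 1$ for $t = 1,\ldots,m$, define $\mathbf{i}^{(p)} \in \{0,1,u\}^{n}$ by modifying $\omega|_{n}$ only at $p$ (changing $0$ to $u$) and at each $p + \lambda_{t}$ (changing $1$ to $0$). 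Then
\begin{displaymath} \psi_{\mathbf{i}^{(p)}}(0) - \psi_{\omega|_{n}}(0) = 4^{-p}\Bigl(u - \sum_{t=1}^{m} 4^{-\lambda_{t}}\Bigr) = \sum_{t > m} 4^{-p-\lambda_{t}} \leq \tfrac13 \cdot 4^{-n}, \end{displaymath}
where the last bound uses $p + \lambda_{m+1} > n$; since also $|\pi(\omega) - \psi_{\omega|_{n}}(0)| \leq \tfrac13 \cdot 4^{-n}$, the cylinder $\psi_{\mathbf{i}^{(p)}}(K)$ lies in $B(\pi(\omega),\tfrac43 \cdot 4^{-n})$, i.e.\ $\mathbf{i}^{(p)}$ is counted by $N_{n}(\pi(\omega))$. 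Distinct admissible positions whose flip sets $\{p,p+\lambda_{1},\ldots,p+\lambda_{m}\}$ are pairwise disjoint give distinct words $\mathbf{i}^{(p)}$, all different from $\omega|_{n}$, so $N_{n}(\pi(\omega)) \geq 1 + \#\{\text{such } p\}$.

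It remains to prove that this count tends to infinity $\nu$-almost surely and, crucially, \emph{for every large} $n$. For $n$ in the window $[\lambda_{j}+1,\lambda_{j+1}]$ with $j \geq 2$, I would restrict to positions $p \in (n-\lambda_{j},\,n-\lambda_{j-1}]$, for which $m(p,n) = j-1$, and then sparsify by taking $p$ in a single residue class modulo $\lambda_{j-1}+1$: the flip sets $\{p,p+\lambda_{1},\ldots,p+\lambda_{j-1}\}$ then become pairwise disjoint, so the events $E_{p} := \{\omega_{p}=0\} \cap \bigcap_{t=1}^{j-1}\{\omega_{p+\lambda_{t}}=1\}$ are \emph{independent}, each of probability $3^{-j}$, and there are $\gtrsim \lambda_{j}/\lambda_{j-1} \asymp 3^{2\cdot 3^{j-1}}$ of them. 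Hence the number $X_{n}$ of such $p$ for which $E_{p}$ holds has mean $M_{j} \gtrsim 3^{2\cdot 3^{j-1}-j} \to \infty$, and a Chernoff bound gives $\nu(X_{n} < M_{j}/2) \leq e^{-cM_{j}}$; therefore $\nu(\exists\, n \in [\lambda_{j}+1,\lambda_{j+1}] : X_{n} < M_{j}/2) \leq \lambda_{j+1}\,e^{-cM_{j}}$, and since $e^{-cM_{j}}$ decays far faster than $\lambda_{j+1} \asymp 3^{3^{j+1}}$ grows this is summable over $j$. By Borel--Cantelli, $\nu$-almost every $\omega$ satisfies $N_{n}(\pi(\omega)) \geq 1 + M_{j}/2$ for all $n$ in all sufficiently large windows, whence $N_{n}(\pi(\omega)) \to \infty$ and the reductions above finish the argument.

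The step I expect to be genuinely delicate is this last one: a version of the density estimate holding only along a subsequence of scales would follow from much cruder counting, but the definition of $\Theta^{s}_{\ast}$ demands control at \emph{all} small radii, and it is precisely the growth rate $\lambda_{j+1} \approx \lambda_{j}^{3}$ — not merely $\lambda_{j}\to\infty$ — that makes the window-by-window Borel--Cantelli argument succeed, each window $[\lambda_{j}+1,\lambda_{j+1}]$ containing astronomically many scales $n$ while its total failure probability $\lambda_{j+1}e^{-cM_{j}}$ still tends to $0$ superexponentially. (One could alternatively note that $x \mapsto \liminf_{n}3^{n}\mu(B(x,4^{-n}))$ is $\mu$-a.e.\ constant by ergodicity of the shift on $\Omega$, but excluding a finite value for this constant still appears to require essentially the construction above.)
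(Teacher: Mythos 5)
Your proposal is correct and follows essentially the same route as the paper: the same symbolic reduction to counting level-$n$ words whose images of $0$ lie within $\lesssim 4^{-n}$ of $x$, the same digit-swap exploiting that the truncations of the base-$4$ expansion of $u$ approximate $u$ to error $\lesssim 4^{-\lambda_{m+1}}$, and the same sparsification into independent events followed by a binomial tail bound and Borel--Cantelli. The only cosmetic differences are the direction of the swap ($0 \mapsto u$, $1 \mapsto 0$ instead of $u \mapsto 0$, $0 \mapsto 1$) and grouping the scales into windows before applying Borel--Cantelli rather than summing the failure probabilities over all scales directly.
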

The condition $\Theta^{s}_{\ast}(\mu,x) = \infty$ has a natural geometric interpretation, which will be formulated in the next lemma. First we need to introduce some notation. Let $\mathcal{I}_{0} = \{[0,1]\}$, and, for $n \geq 1$, define the collection of intervals
\begin{displaymath} \mathcal{I}_{n} := \{\psi_{i_{1}} \circ \ldots \circ \psi_{i_{n}}([0,1]) : (i_{1},\ldots,i_{n}) \in \{0,1,u\}^{n}\} \end{displaymath}
for $n \geq 1$. Then, it is easy to verify that the natural self-similar measure $\mu$ on $K$ has the property that
\begin{displaymath} \mu(J) \geq \frac{\sharp \{I \in \mathcal{I}_{n} : I \subset J\}}{3^{n}} \end{displaymath}
for any interval $J \subset \R$ and any $n \in \N$.
\begin{lemma} Let $x \in \R$. Assume that there exists $C \geq 1$ such that 
\begin{equation}\label{mainEq} \liminf_{n \to \infty} \sharp \{(i_{1},\ldots,i_{n}) \in \{0,1,u\}^{n} : \psi_{i_{1}} \circ \ldots \circ \psi_{i_{n}}(0) \in B(x,C4^{-n})\} = \infty. \end{equation} 
Then $\Theta_{\ast}^{s}(\mu,x) = \infty$.
\end{lemma}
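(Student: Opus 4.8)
The plan is to evaluate $\Theta_{\ast}^{s}(\mu,x)$ along the geometric sequence of scales $r_{n} := (C + 1)4^{-n}$, where the stated density bound $\mu(J) \geq 3^{-n}\sharp\{I \in \mathcal{I}_{n} : I \subset J\}$ and the identity $4^{s} = 3$ (coming from $s = \log 3/\log 4$) combine cleanly, and then to pass to arbitrary $r \to 0$ by monotonicity of $r \mapsto \mu(B(x,r))$.

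First I would record the geometry of the level-$n$ cylinders. Since each $\psi_{i}$ is an increasing similitude of ratio $1/4$, the composition $\psi_{i_{1}} \circ \ldots \circ \psi_{i_{n}}$ is increasing of ratio $4^{-n}$, so the interval $\psi_{i_{1}} \circ \ldots \circ \psi_{i_{n}}([0,1]) \in \mathcal{I}_{n}$ has length $4^{-n}$ and left endpoint $\psi_{i_{1}} \circ \ldots \circ \psi_{i_{n}}(0)$. Consequently the condition $\psi_{i_{1}} \circ \ldots \circ \psi_{i_{n}}(0) \in B(x,C4^{-n})$ in \eqref{mainEq} forces the entire cylinder into $B(x,r_{n})$. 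I would also note that distinct index tuples of a fixed length $n$ yield distinct intervals in $\mathcal{I}_{n}$: equality of two left endpoints would give $\sum_{k = 1}^{n}(a_{i_{k}} - a_{j_{k}})4^{-k} = 0$ with $a_{0} = 0$, $a_{1} = 1$, $a_{u} = u$, and writing each difference as $p_{k} + q_{k}u$ with $p_{k},q_{k} \in \{-1,0,1\}$ and using $u \notin \Q$ together with the uniqueness of (signed) base-$4$ expansions forces all $p_{k} = q_{k} = 0$, whence the tuples coincide. (Alternatively, the short argument behind the stated density bound in fact gives the same estimate with index tuples $(i_{1},\ldots,i_{n})$ in place of the intervals $I \in \mathcal{I}_{n}$, which is all we need here.) Writing $M_{n}$ for the counting function in \eqref{mainEq}, these observations give $\mu(B(x,r_{n})) \geq 3^{-n}M_{n}$.

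It remains to extract the full liminf. Since $4^{s} = 3$, we have $(2r_{n})^{s} = (2(C + 1))^{s}\,3^{-n}$. Given small $r > 0$, pick $n$ with $r_{n + 1} < r \leq r_{n}$; then $B(x,r_{n + 1}) \subset B(x,r)$, and
\begin{displaymath} \frac{\mu(B(x,r))}{(2r)^{s}} \;\geq\; \frac{\mu(B(x,r_{n + 1}))}{(2r_{n})^{s}} \;\geq\; \frac{3^{-(n + 1)}M_{n + 1}}{(2(C + 1))^{s}\,3^{-n}} \;=\; \frac{M_{n + 1}}{3\,(2(C + 1))^{s}}. \end{displaymath}
As $r \to 0$ we have $n \to \infty$, and the hypothesis $\liminf_{n} M_{n} = \infty$ says exactly that $M_{n} \to \infty$; thus the right-hand side tends to $\infty$, i.e. $\Theta_{\ast}^{s}(\mu,x) = \infty$, as claimed.

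All the ingredients here are elementary, so I do not anticipate a real obstacle. The two points that call for a little care — and which are addressed above — are checking that the ``left endpoint close to $x$'' condition of \eqref{mainEq} really confines a whole level-$n$ interval to a ball of comparable radius, so that the density bound applies, and the routine interpolation that promotes the estimate from the scales $4^{-n}$ to all small $r$.
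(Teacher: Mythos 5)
Your proof is correct and follows essentially the same route as the paper: cylinders whose left endpoints lie in $B(x,C4^{-n})$ are swallowed by $B(x,(C+1)4^{-n})$, and the density bound together with $4^{ns}=3^{n}$ gives a lower bound growing like the count in \eqref{mainEq}. The only difference is that you spell out two points the paper leaves implicit --- that distinct index tuples yield distinct cylinder intervals (equivalently, that the density bound holds with tuples in place of intervals) and the routine interpolation from the scales $(C+1)4^{-n}$ to arbitrary $r\to 0$ --- both handled correctly.
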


\begin{proof} Given $M > 0$, we find $n_{M} \in \N$ such that at least $M$ distinct points of the form $\psi_{i_{1}} \circ \ldots \circ \psi_{i_{n}}(0)$ are contained in $B(x,C4^{-n})$ for $n \geq n_{M}$. The corresponding intervals in $\mathcal{I}_{n}$ have length $4^{-n}$, and are thus contained in $B(x,(C + 1)4^{-n})$. This shows that
\begin{displaymath} \frac{\mu(B(x,(C + 1)4^{-n}))}{(2(C + 1)4^{-n})^{s}} \geq [2(C + 1)]^{-s} \cdot \frac{M}{3^{n}} \cdot 4^{ns} = \frac{M}{[2(C + 1)]^{s}}, \qquad n \geq n_{M}, \end{displaymath} 
which finishes the proof.
\end{proof}

\section{Proof of $\mathcal{P}^{s}(K) = 0$}

The goal of this section is to demonstrate that the condition in \eqref{mainEq} is met at $\mu$ almost every point $x \in \R$. Let us begin by introducing some further notation and terminology. Write $\Omega := \{0,1,u\}^{\N}$, and let $\pi \colon \Omega \to \spt \mu = K$ be the projection
\begin{displaymath} \pi(\omega_{1},\omega_{2},\ldots) := \lim_{n \to \infty} \psi_{\omega_{1}} \circ \ldots \circ \psi_{\omega_{n}}(0) = \sum_{n = 1}^{\infty} \omega_{n} \cdot 4^{-n}. \end{displaymath}
Then $\mu = \pi_{\sharp}\tn$, where $\tn$ is the equal-weights product measure on $\Omega$. Let $\omega = (\omega_{1},\omega_{2},\ldots) \in \Omega$, and let $i,j \in \N$ be indices with $i \leq j$. We say that $(\omega,j)$ \emph{is influenced by $(\omega,i)$}, if there exists $k \in \N$ such that $i + \lambda_{k} < j \leq i + \lambda_{k + 1}$, and
\begin{displaymath} (\omega_{i},\omega_{i + \lambda_{1}},\ldots,\omega_{i + \lambda_{k}}) = (u,\underbrace{0,\ldots,0}_{k \text{ zeroes}}). \end{displaymath} 
Then, define
\begin{displaymath} S(\omega,j) := \sharp \{1 \leq i \leq j : (\omega,j) \text{ is influenced by } (\omega,i)\}. \end{displaymath} 
The point of this definition is, as we shall see later, that \eqref{mainEq} holds for all points $x = \pi(\omega) \in K$ such that 
\begin{equation}\label{mainEq2} \liminf_{j \to \infty} S(\omega,j) = \infty. \end{equation}
This in mind, we need to establish the following.
\begin{lemma}\label{mainC2} The equation \eqref{mainEq2} is valid $\tn$ almost surely.
\end{lemma}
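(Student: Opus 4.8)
The plan is to rewrite the relation ``$(\omega,j)$ is influenced by $(\omega,i)$'' as a transparent function of the i.i.d.\ coordinates of $\tn$, to bound $S(\omega,j)$ below by a sum of nearly independent indicators whose $\tn$-mean grows polynomially fast in $j$, and then to combine a concentration estimate with the (first) Borel--Cantelli lemma applied along the full sequence $j = 1, 2, \ldots$

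First I would introduce, for $\omega \in \Omega$ and $i \in \N$, the height
\begin{displaymath} \tau_{i} = \tau_{i}(\omega) := \begin{cases} \sup\{k \geq 0 : \omega_{i + \lambda_{1}} = \cdots = \omega_{i + \lambda_{k}} = 0\}, & \text{if } \omega_{i} = u, \\ -1, & \text{if } \omega_{i} \neq u, \end{cases} \end{displaymath}
so that $\tau_{i} \geq 0$ precisely when $\omega_{i} = u$. Since the intervals $(\lambda_{k},\lambda_{k+1}]$, $k \geq 1$, partition $(\lambda_{1},\infty)$, unwinding the definition shows that $(\omega,j)$ is influenced by $(\omega,i)$ if and only if $\tau_{i} \geq 1$ and $i + \lambda_{1} < j \leq i + \lambda_{\tau_{i}+1}$, i.e.\ $i \in [\,j - \lambda_{\tau_{i}+1},\, j - \lambda_{1})$. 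Hence, for every $k \geq 1$,
\begin{displaymath} S(\omega,j) \;\geq\; N_{k}(\omega,j) := \sharp\{\, i \in \N : j - \lambda_{k+1} \leq i < j - \lambda_{k}, \; \tau_{i} \geq k \,\}, \end{displaymath}
because each such $i$ influences $(\omega,j)$ and distinct values of $k$ contribute disjoint ranges of $i$. Here $\tn(\tau_{i} \geq k) = 3^{-(k+1)}$, and for each fixed $i$ the coordinates $\omega_{i},\omega_{i+\lambda_{1}},\omega_{i+\lambda_{2}},\ldots$ are $\tn$-independent.

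Now set $K_{j} := \max\{k \geq 1 : \lambda_{k} < j\}$, so $K_{j} \to \infty$ and, from $\lambda_{K_{j}} < j \leq \lambda_{K_{j}+1}$, one gets $\tfrac{1}{3}\log_{3}j \leq 3^{K_{j}} < \log_{3}j$; also $\lambda_{K_{j}-1} \leq j/2$ for all large $j$, since the ratios $\lambda_{k+1}/\lambda_{k}$ grow doubly exponentially. For $j$ so large that $K_{j} \geq 2$, I would work with the two topmost admissible bands at once, setting $N_{j} := N_{K_{j}}(\omega,j) + N_{K_{j}-1}(\omega,j) \leq S(\omega,j)$. A one-line computation of band widths gives, with $\mathbf{E}$ denoting expectation with respect to $\tn$,
\begin{displaymath} \mathbf{E}[N_{j}] \;=\; (j - \lambda_{K_{j}} - 1)\,3^{-(K_{j}+1)} \;+\; (\lambda_{K_{j}} - \lambda_{K_{j}-1})\,3^{-K_{j}} \;\geq\; (j - \lambda_{K_{j}-1} - 1)\,3^{-(K_{j}+1)} \;\gtrsim\; \frac{j}{\log j}, \end{displaymath}
for large $j$ --- the point being that the two widths sum to $j - \lambda_{K_{j}-1} \geq j/2$, while the smaller weight $3^{-K_{j}}$ is only $1/\mathrm{polylog}(j)$. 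Since $N_{j}$ is a function of finitely many of the independent coordinates $\omega_{1},\ldots,\omega_{j-1}$, and changing any single one of them alters $N_{j}$ by at most $2K_{j}+1 \lesssim \log\log j$, McDiarmid's bounded-differences inequality gives
\begin{displaymath} \tn\big( N_{j} \leq \tfrac{1}{2}\mathbf{E}[N_{j}] \big) \;\leq\; \exp\!\Big( -c\,\frac{\mathbf{E}[N_{j}]^{2}}{j\,(\log\log j)^{2}} \Big) \;\leq\; \exp\!\Big( -c'\,\frac{j}{(\log j)^{2}(\log\log j)^{2}} \Big), \end{displaymath}
which is summable over $j \in \N$. (Alternatively, one may restrict $i$ to an arithmetic progression with common difference exceeding $\lambda_{K_{j}}$, which makes the relevant indicators genuinely independent, and then invoke a Chernoff bound.) By Borel--Cantelli, $\tn$-almost surely $S(\omega,j) \geq N_{j} > \tfrac{1}{2}\mathbf{E}[N_{j}] \gtrsim j/\log j$ for all sufficiently large $j$, and letting $j \to \infty$ yields $\liminf_{j \to \infty} S(\omega,j) = \infty$ $\tn$-almost surely, which is precisely \eqref{mainEq2}.

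The main obstacle is making the deviation estimate summable \emph{along the entire sequence} $j = 1, 2, \ldots$: for consecutive $j$ the events $\{S(\omega,j)\text{ is small}\}$ overlap heavily, so a direct union bound over $j$ is hopeless and one is pushed into a first--Borel--Cantelli argument. That in turn demands a lower bound for $S(\omega,j)$ whose mean is $\gtrsim j/\mathrm{polylog}(j)$, not merely $\to \infty$ --- which is exactly what the rapid growth of $(\lambda_{k})$ provides, since the top two admissible band widths already cover a definite fraction of $j$ and the only loss is the polylogarithmic weight $3^{-K_{j}}$. The reformulation via $\tau_{i}$, the first-moment computation, and the concentration step are all routine.
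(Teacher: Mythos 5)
Your proof is correct, and its skeleton is the same as the paper's: lower-bound $S(\omega,j)$ by the number of influencing indices $i$ lying in the deepest band(s) below $j$, prove a concentration estimate for that count, and finish with the first Borel--Cantelli lemma over all $j$. The implementation differs in two ways. The paper fixes a threshold $M$, works only with the single band $j-\lambda_{k+1}\leq i< j-\lambda_{k}$ (where $\lambda_{k+1}\leq j<\lambda_{k+2}$), and \emph{subsamples} one starting point per block of length about $\lambda_{k}$; the resulting indicators are genuinely independent, so the count is exactly $\operatorname{Bin}(N,3^{-k-1})$ with $N\asymp\lambda_{k+1}/\lambda_{k}$ and Hoeffding applies directly, the loss of a factor $\lambda_{k}$ in the number of trials being harmless because the mean is still astronomically large. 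You instead keep \emph{all} indices in the top two bands, accept that the indicators overlap, and control the dependence with McDiarmid's bounded-differences inequality (the difference bound $2K_{j}+1\lesssim\log\log j$ is the key observation); this yields the quantitatively stronger conclusion $S(\omega,j)\gtrsim j/\log j$ eventually almost surely, rather than merely $S(\omega,j)\to\infty$, and your parenthetical alternative (spacing the indices by more than $\lambda_{K_{j}}$ and using Chernoff) is essentially the paper's device. All the supporting computations in your write-up check out: the reformulation of influence via $\tau_{i}$, the disjointness of the bands, $\mathbf{E}[N_{j}]\gtrsim j/\log j$, and the summability of the deviation probabilities; the only blemish is the claimed lower bound $\tfrac13\log_{3}j\leq 3^{K_{j}}$, which can fail by a negligible amount when $\lambda_{k}=3^{3^{k}}+1$, but it is never used, since only the upper bound $3^{K_{j}}<\log_{3}j$ enters the argument.
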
   

\begin{proof} We will be done as soon as we show that
\begin{displaymath} \tn[\{\omega : \liminf_{j \to \infty} S(\omega,j) \leq M\}] = 0 \end{displaymath} 
for any given $M \in \N$. We note that 
\begin{displaymath} \{\omega : \liminf_{j \to \infty} S(\omega,j) \leq M\} \subset \limsup_{j \to \infty} B_{j,M}, \end{displaymath} 
where $B_{j,M}$ is the set
\begin{displaymath} B_{j,M} = \{\omega : S(\omega,j) \leq M\}. \end{displaymath} 
Using the Borel-Cantelli lemma, we get $\tn[\limsup B_{j,M}] = 0$, if we manage to prove that
\begin{equation}\label{aux1} \sum_{j = 1}^{\infty} \tn[B_{j,M}] < \infty. \end{equation}
Thus, \eqref{aux1} will imply Lemma \ref{mainC2}. 

We are aiming at an upper bound for $\tn[B_{j,M}]$. Fix $j \geq \lambda_{1}$ and choose $k = k(j) \in \N \cup \{0\}$ so that $\lambda_{k + 1} \leq j < \lambda_{k + 2}$. Then divide the natural numbers between $j - \lambda_{k + 1}$ and $j - 1$ into consecutive blocks $I_{1},\ldots,I_{N}$ of length $|I_{j}| \in [\lambda_{k} + 1, 2\lambda_{k}]$. Let $i_{1},\ldots,i_{N}$ be the smallest numbers in these blocks. Then 
\begin{displaymath} N \asymp \lambda_{k + 1}/\lambda_{k} \asymp 3^{3^{k + 1}-3^{k}} \geq 3^{3^{k}}. \end{displaymath}
Now, the blocks $I_{n}$ are disjoint, so the random variable $X_{j} \colon \Omega \to \N$ defined by
\begin{displaymath} X_{j}(\omega) = \sharp\{1 \leq n \leq N : (\omega_{i_{n}},\omega_{i_{n} + \lambda_{1}},\ldots,\omega_{i_{n} + \lambda_{k}}) = (u,0,\ldots,0)\} \end{displaymath}
has distribution $X_{j} \sim \operatorname{Bin}(N,p_{j})$, where the 'success probability' $p_{j}$ equals
\begin{displaymath} p_{j} := \tn[\{\omega : (\omega_{i_{n}},\omega_{i_{n} + \lambda_{1}},\ldots,\omega_{i_{n} + \lambda_{k}}) = (u,0,\ldots,0)\}] = 3^{-k - 1}. \end{displaymath} 

\noindent We claim that $B_{j,M} \subset \{X_{j} \leq M\}$. Indeed, suppose that $\omega \in B_{j,M}$. Then, in particular, there are at most $M$ among the numbers $i_{n}$, $1 \leq n \leq N$, such that $(\omega,j)$ is influenced by $(\omega,i_{n})$. For the rest of the numbers $i_{n}$ either
\begin{equation}\label{aux2} (\omega_{i_{n}},\omega_{i_{n} + \lambda_{1}},\ldots,\omega_{i_{n} + \lambda_{k}}) \neq (u,0,\ldots,0) \end{equation}

\noindent or $i_{n} + \lambda_{k + 1} < j$, or $j \leq i_{n} + \lambda_{k}$, by definition of the notion of 'influence'. The latter two possibilities are absurd, since $i_{n} \geq j - \lambda_{k + 1}$ and $i_{n} + \lambda_{k} \in I_{n} \subset \{1,\ldots,j - 1\}$. Thus, \eqref{aux2} must hold for all but at most $M$ numbers $i_{n}$, which means that precisely that $X_{j}(\omega) \leq M$. 

The probability $\tn[\{X_{j} \leq M\}]$ can be estimated by a standard tail bound for the binomial distribution (or see Hoeffding's inequality \cite{Hoe}):
\begin{displaymath} \tn[\{X_{j} \leq M\}] \leq \exp\left(-2\frac{(N3^{-k - 1} - M)^{2}}{N}\right) \leq C_{M}\exp(-3^{3^{k} - 2k}), \end{displaymath} 
so that finally
\begin{align*} \sum_{j = 1}^{\infty} \tn[B_{j,M}] & \leq \sum_{j = 1}^{\lambda_{1} - 1} \tn[B_{j,M}] + \sum_{k = 0}^{\infty} \sum_{\lambda_{k + 1} \leq j < \lambda_{k + 2}} \tn[\{X_{j} \leq M\}]\\
& \leq \lambda_{1} + C_{M}\sum_{k = 0}^{\infty} 3^{3^{k + 2}}\exp(-3^{3^{k} - 2k}) < \infty.  \end{align*} 
This proves Lemma \ref{mainC2}. \end{proof}

Finally, it is time to check that our definition of 'influence' is useful:
\begin{lemma}\label{claimF} Assume that $\omega \in \Omega$ satisfies \eqref{mainEq2}. Then $\pi(\omega) \in K$ satisfies \eqref{mainEq}.
\end{lemma}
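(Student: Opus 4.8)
The plan is to deduce \eqref{mainEq}, with the explicit constant $C = 2$, by exhibiting for each $n$ at least $S(\omega,n)$ distinct words $(i_{1},\ldots,i_{n}) \in \{0,1,u\}^{n}$ with $\psi_{i_{1}} \circ \ldots \circ \psi_{i_{n}}(0) \in B(\pi(\omega), 2 \cdot 4^{-n})$. Since \eqref{mainEq2} says precisely that $S(\omega,n) \to \infty$, this will suffice. Throughout, write $x = \pi(\omega) = \sum_{m = 1}^{\infty} \omega_{m} 4^{-m}$ and recall that $\psi_{i_{1}} \circ \ldots \circ \psi_{i_{n}}(0) = \sum_{m = 1}^{n} i_{m} 4^{-m}$.

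The words will be obtained by ``unfolding'' a single occurrence of the digit $u$, using the identity $u \cdot 4^{-i} = \sum_{\ell \ge 1} 4^{-(i + \lambda_{\ell})}$, which is \eqref{U} shifted by $i$. Fix $n$, let $1 \le i \le n$ be any index by which $(\omega,n)$ is influenced, and let $k = k(i) \ge 1$ be the index witnessing this, so $\omega_{i} = u$, $\omega_{i + \lambda_{1}} = \ldots = \omega_{i + \lambda_{k}} = 0$, and $i + \lambda_{k} < n \le i + \lambda_{k + 1}$. Define $w^{(i)} \in \{0,1,u\}^{n}$ by $w^{(i)}_{i} := 0$, $w^{(i)}_{i + \lambda_{\ell}} := 1$ for $1 \le \ell \le k$, and $w^{(i)}_{m} := \omega_{m}$ for all remaining $m \le n$; every changed coordinate is at most $i + \lambda_{k} < n$, so this is a well-defined word of length $n$. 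The coordinatewise differences $\omega_{m} - w^{(i)}_{m}$ equal $u$ at $m = i$ (using $\omega_{i} = u$), equal $-1$ at $m = i + \lambda_{\ell}$ for $1 \le \ell \le k$ (using $\omega_{i + \lambda_{\ell}} = 0$, the defining property of influence), and vanish otherwise, so they combine with the shifted identity to give
\begin{displaymath} x - \sum_{m = 1}^{n} w^{(i)}_{m} 4^{-m} = \sum_{\ell = k + 1}^{\infty} 4^{-(i + \lambda_{\ell})} + \sum_{m = n + 1}^{\infty} \omega_{m} 4^{-m}. \end{displaymath}
Both summands are non-negative; the first is at most $\tfrac{4}{3} 4^{-(i + \lambda_{k + 1})} \le \tfrac{4}{3} 4^{-n}$, using $i + \lambda_{k + 1} \ge n$ and that the $\lambda_{\ell}$ are strictly increasing integers, while the second is at most $\tfrac{1}{3} 4^{-n}$ since $0 \le \omega_{m} \le 1$. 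Hence $\psi_{w^{(i)}_{1}} \circ \ldots \circ \psi_{w^{(i)}_{n}}(0) \in B(x, 2 \cdot 4^{-n})$.

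It remains to check that distinct influencing indices $i \ne i'$ produce distinct words, which is immediate from coordinate $i$: it equals $0$ in $w^{(i)}$, but in $w^{(i')}$ it equals either $\omega_{i} = u$ or, in the case $i = i' + \lambda_{\ell}$ for some $1 \le \ell \le k(i')$, the digit $1$ — in neither case $0$. Therefore the count appearing in \eqref{mainEq} is at least $S(\omega,n)$ for every $n$, and letting $n \to \infty$ gives \eqref{mainEq}.

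I expect the argument to be essentially mechanical; the one point needing a little care is the displayed identity together with the bound on its right-hand side, where the clause $n \le i + \lambda_{k + 1}$ in the definition of influence is exactly what forces the leftover tail $\sum_{\ell > k} 4^{-(i + \lambda_{\ell})}$ to stay at the scale $4^{-n}$.
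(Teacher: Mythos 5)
Your proposal is correct and follows essentially the same route as the paper: for each index $i$ influencing $(\omega,n)$ you unfold the digit $u$ at position $i$ into the digits $1$ at positions $i+\lambda_{1},\ldots,i+\lambda_{k}$, bound the resulting error by a constant times $4^{-n}$ using $i+\lambda_{k+1}\geq n$, and distinguish the words by looking at coordinate $i$. The only cosmetic differences are that you work out the explicit constant $C=2$ and verify injectivity for both orderings of $i,i'$ directly, rather than reducing to $i<i'$ as the paper does.
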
 

\begin{proof} Fix $\omega \in \Omega, j > 1$, and write $x = \pi(\omega)$. Our task is to find many sequences $(i_{1},\ldots,i_{j}) \in \{0,1,u\}^{j}$ such that 
\begin{displaymath} \psi_{i_{1}} \circ \ldots \circ \psi_{i_{j}}(0) = \sum_{n = 1}^{j} i_{n} \cdot 4^{-n} \in B(x,C4^{-j}), \end{displaymath}

\noindent where $C \geq 1$ is some absolute constant. One such sequence is always obtained by taking $(i_{1},\ldots,i_{j}) = (\omega_{1},\ldots,\omega_{j})$, since
\begin{equation}\label{form1} \left|\sum_{n = 1}^{j} \omega_{n} \cdot 4^{-n} - x \right| = \sum_{n = j + 1}^{\infty} \omega_{n} \cdot 4^{-n} \asymp 4^{-j}. \end{equation} 
Here is how we find other sequences. Suppose that $(\omega,j)$ is influenced by $(\omega,i)$ for some $1 \leq i \leq j$. Once again, this means that $i + \lambda_{k} < j \leq i + \lambda_{k + 1}$ and $(\omega_{i},\omega_{i + \lambda_{1}},\ldots,\omega_{i + \lambda_{k}}) = (u,0,\ldots,0)$. Consider the modified sequence $\tilde{\omega}$, which is otherwise identical with $\omega$, except that the symbol $u$ at index $i$ is replaced by $0$, and the zeroes at the indices $i + \lambda_{1},\ldots,i + \lambda_{k}$ are replaced by $1$. Then, using the definition of $u$, we have
\begin{align*} \Big|\sum_{n = 1}^{j} \omega_{n} \cdot 4^{-n} & - \sum_{n = 1}^{j} \tilde{\omega}_{n} \cdot 4^{-n} \Big|\\
& = |(u - 0) \cdot 4^{-i} + (0 - 1) \cdot 4^{-(i + \lambda_{1})} + \ldots + (0 - 1) \cdot 4^{-(i + \lambda_{k})} |\\
& = \left|\sum_{n = 1}^{\infty} 4^{-i - \lambda_{n}} - \sum_{n = 1}^{k} 4^{-i - \lambda_{k}} \right| = \sum_{n = k + 1}^{\infty} 4^{-i - \lambda_{n}} \lesssim 4^{-j}, \end{align*} 
since $i + \lambda_{k + 1} \geq j$. It follows from this and \eqref{form1} that $\psi_{\tilde{\omega}_{1}} \circ \ldots \circ \psi_{\tilde{\omega}_{j}}(0) \in B(x,C4^{-j})$ for some absolute constant $C \geq 1$. Thus, for each pair $(\omega,i)$ influencing the pair $(\omega,j)$, the construction just described produces a sequence $(i_{1},\ldots,i_{j})$ with $\psi_{i_{1}} \circ \ldots \circ \psi_{i_{j}}(0) \in B(x,C4^{-j})$. 

Moreover, no sequence $(i_{1},\ldots,i_{j})$ is obtained twice in this manner. For if $(\omega,j)$ is influenced by both $(\omega,i)$ and $(\omega,i')$ with $i < i'$, say, then 
\begin{itemize}
\item $\omega_{i} = u$, by definition of $(\omega,i)$ influencing $(\omega,j)$, and
\item both $i$ and $i'$ give rise to modified sequences $\tilde{\omega}$ and $\tilde{\omega}'$, as above. 
\end{itemize}
Recalling how these sequences were constructed, we see that $\tilde{\omega}_{i} = 0$. On the other hand, $\tilde{\omega}'$ coincides with $\omega$ for all indices smaller than $i'$, so in particular $\tilde{\omega}'_{i} = \omega_{i} = u$. This means that $\tilde{\omega} \neq \tilde{\omega}'$ and completes the proof of the claim. \end{proof}

To conclude the proof of $\mathcal{P}^{s}(K) = 0$, we note that, by Lemma \ref{claimF}, the set $G = \{x : \eqref{mainEq} \text{ holds at } x\}$ contains the $\pi$-images of all those sequences $\omega \in \Omega$ where \eqref{mainEq2} holds. The set consisting of such sequences has full $\tn$-measure according to Lemma \ref{mainC2}. Hence, the equation $\mu = \pi_{\sharp}\tn$ implies full $\mu$-measure for $G$.



\end{document}